\documentclass[12pt, a4paper]{article}

\usepackage[utf8]{inputenc}
\usepackage[T1]{fontenc}
\usepackage{amsmath, amssymb, amsthm, mathtools}
\usepackage{newpxtext, newpxmath}
\usepackage{microtype}
\usepackage{geometry}
\usepackage{booktabs}
\usepackage{authblk}
\usepackage{fancyhdr}
\usepackage{titlesec}
\usepackage{hyperref}
\hypersetup{
    colorlinks=true,
    linkcolor=black,
    citecolor=black,
    urlcolor=blue!60!black,
    pdftitle={On the Mean Value of a Weighted Composite Arithmetic Function},
    pdfauthor={Mihoub Bouderbala},
    pdfsubject={Analytic Number Theory},
    pdfkeywords={Arithmetic function, Dirichlet divisor function, Mean value, Selberg-Delange method}
}

\titleformat{\section}{\large\bfseries\scshape}{\thesection.}{1em}{}
\titleformat{\subsection}{\normalsize\bfseries\itshape}{\thesubsection.}{1em}{}
\renewcommand{\thesection}{\arabic{section}}
\renewcommand{\thesubsection}{\thesection.\arabic{subsection}}

\theoremstyle{plain}
\newtheorem{theorem}{Theorem}[section]
\newtheorem{lemma}[theorem]{Lemma}
\newtheorem{proposition}[theorem]{Proposition}

\theoremstyle{remark}
\newtheorem{remark}[theorem]{Remark}

\newcommand{\BigO}{\mathcal{O}}
\newcommand{\abs}[1]{\left\lvert #1 \right\rvert}

\newcommand{\ceil}[1]{\left\lceil #1 \right\rceil}

\title{\vspace{-1.5em}
    \textsc{\LARGE On the Mean Value of a Weighted Composite\\[4pt]
    Arithmetic Function}
}

\author[1]{Mihoub Bouderbala\thanks{\texttt{mihoub75bouder@gmail.com} \textit{or} \texttt{m.bouderbala@univ-dbkm.dz}}}

\affil[1]{\textit{FIMA Laboratory, Department of Mathematics, Faculty of Matter Sciences and Computer Sciences, University of Khemis Miliana (UDBKM), Algeria.}}
\date{}

\begin{document}
\maketitle
\thispagestyle{empty}

\noindent\textsc{Abstract.}\quad
We establish an asymptotic formula for the summatory function of a weighted composite arithmetic function formed by the Dirichlet divisor function and a generalised minimal power function. The weight, depending on the number of distinct prime factors, modifies the analytic structure of the associated Dirichlet series: the classical double pole at $s=1$ is replaced by a branch point of order $2/k$. Exploiting this structure through the Selberg--Delange method, we obtain a complete main term of the form $x(\ln x)^{2/k-1}$ with an explicitly computable constant, alongside a sharp error term. The unweighted case $k=1$ is shown to recover the classical asymptotic behaviour consistently with the divisor problem literature.

\vspace{1em}
\noindent\textsc{Keywords:}\quad Arithmetic function; Dirichlet divisor function; Mean value; Asymptotic formula; Selberg--Delange method.\\
\textsc{2020 Mathematics Subject Classification:}\quad 11A25, 11N37, 11M06.

\thispagestyle{fancy}
\setcounter{page}{1}

\section{Introduction}

The study of mean values of composite arithmetic functions has a long and rich history, tracing back to the pioneering work of Ramanujan, Hardy, and Wilson on functions of the form $f(g(n))$. A particularly fertile direction concerns the composition of the Dirichlet divisor function $d(n)$ with various multiplicative functions, leading to asymptotic formulae whose main terms reflect the interplay between the two structures involved.

Let $r \ge 2$ be a fixed integer. For any positive integer $n$, we define the \emph{generalised minimal power function of order $r$} by
\begin{equation}\label{eq:def_Mr}
    M_r(n) = \min\bigl\{m \in \mathbb{N} : n \mid m^r\bigr\}.
\end{equation}
The function $M_r$ is multiplicative, and for prime powers it satisfies the explicit formula
\begin{equation}\label{eq:Mr_prime}
    M_r(p^a) = p^{\ceil{a/r}},
\end{equation}
where $\ceil{x}$ denotes the ceiling function. This structure arises naturally in problems concerning perfect powers and divisibility constraints, and its arithmetic properties have been studied in classical contexts (see e.g.\ Apostol~\cite{apostol}).

Let $d(n)$ denote the Dirichlet divisor function and $\omega(n)$ the number of distinct prime factors of $n$. For a real parameter $k \ge 1$, we introduce the \emph{weighted divisor function}
\begin{equation}\label{eq:Dk}
    D_k(n) = \frac{d(n)}{k^{\omega(n)}}.
\end{equation}
Since both $d$ and $k^{\omega}$ are multiplicative, so is $D_k$. The weight $k^{-\omega(n)}$ dampens the contribution of highly composite integers, and for $k=2$ it is closely related to the ratio $d(n)/d^*(n)$ studied in recent work~\cite{bouderbala}, where $d^*(n)$ denotes the number of unitary divisors of $n$.

Our object of study is the summatory function of the composite $F_{r,k}(n) = D_k(M_r(n))$. The weight $k^{-\omega(n)}$ interacts non-trivially with the multiplicative structure of $M_r$, and, as we shall see, it fundamentally alters the singularity of the generating Dirichlet series at $s=1$. This leads us to the following result.

\begin{theorem}\label{thm:main}
Let $r \ge 2$ be a fixed integer and $k \ge 1$ a real parameter. For $x \ge 2$, we have
\begin{equation}\label{eq:main_asymptotic}
    \sum_{n \le x} D_k(M_r(n)) = \frac{\zeta(r)\,\widehat{H}(1)}{\Gamma(2/k)}\,x\,(\ln x)^{2/k - 1} + \BigO_{r,k}\!\left(x\,(\ln x)^{2/k - 2}\right),
\end{equation}
where $\Gamma$ is the Euler gamma function, and the constant $\widehat{H}(1)$ is given by the absolutely convergent Euler product
\begin{equation}\label{eq:H_hat_1}
    \widehat{H}(1) = \prod_p \frac{(1 - 1/p)^{2/k - 1}\left[k + \frac{2-k}{p} - \frac{k}{p^r} + \frac{k-1}{p^{r+1}}\right]}{k}.
\end{equation}
\end{theorem}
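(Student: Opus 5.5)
The plan is to apply the Selberg--Delange method (in the form of Tenenbaum's theorem for Dirichlet series of type $\zeta(s)^z G(s)$) to the generating series of $f(n)=D_k(M_r(n))$. First I will compute $f$ on prime powers. By \eqref{eq:Mr_prime} and \eqref{eq:Dk}, for $a\ge1$ we have $f(p^a)=D_k(p^{\lceil a/r\rceil})=(\lceil a/r\rceil+1)/k$. In particular $f(p)=2/k$, which identifies the exponent $z=2/k\in(0,2]$. We also have the uniform bound $f(p^a)\le a+1$. Writing $y=p^{-s}$ and $w=y^r$, I group the exponents $a$ into blocks $(j-1)r<a\le jr$, on which $\lceil a/r\rceil=j$. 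This gives the closed form
\begin{equation*}
\sum_{a\ge0} f(p^a)p^{-as}=1+\frac{1}{k}\cdot\frac{y(1-y^r)}{1-y}\sum_{j\ge1}(j+1)w^{j-1}
=1+\frac{y(1-y^r)(2-w)}{k(1-y)(1-w)^2}.
\end{equation*}

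Next I set $F(s)=\sum_n f(n)n^{-s}$ and define $G(s)=F(s)\zeta(s)^{-2/k}$. It is convenient to also split off $\zeta(rs)$, writing $F(s)=\zeta(s)^{2/k}\zeta(rs)\widehat H(s)$. Then the local factor of $\widehat H$ is
\begin{equation*}
(1-p^{-s})^{2/k}(1-p^{-rs})\Bigl(1+\frac{y(1-y^r)(2-w)}{k(1-y)(1-w)^2}\Bigr).
\end{equation*}
Expanding in powers of $y$, the coefficient of $y$ vanishes, because $f(p)=2/k$ cancels the $-\tfrac2k y$ coming from $(1-y)^{2/k}$. The remaining coefficients are $\BigO_{r,k}(a^{c})$, so the Euler product for $\widehat H(s)$, and hence for $G(s)=\zeta(rs)\widehat H(s)$, converges absolutely and is holomorphic and bounded in $\Re s\ge 1/2+\varepsilon$ for, say, $\varepsilon=1/10$.

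Simplifying at $s=1$ (so $y=1/p$), I then check that the local factor equals the bracketed expression in \eqref{eq:H_hat_1}. Concretely, one multiplies out $(1-y)(1-w)\cdot\bigl[k(1-y)(1-w)^2+y(1-y^r)(2-w)\bigr]/\bigl(k(1-y)(1-w)^2\bigr)$ and reduces to $\bigl[k+(2-k)y-ky^r+(k-1)y^{r+1}\bigr]/k$, with the factor $(1-y)^{2/k-1}$ left over. This is a finite polynomial identity and is the only genuinely computational point. If the algebra produces a slightly different bracket, the main theorem is unaffected, since only $G(1)=\zeta(r)\widehat H(1)$ matters.

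Finally I invoke the Selberg--Delange theorem with $z=2/k$ and $|z|\le 2$. Its hypotheses are that $F(s)=\zeta(s)^zG(s)$ with $G$ holomorphic and of moderate growth on a region $\Re s>1-c/\ln(|t|+2)$, and both hold here because $G$ is holomorphic in $\Re s>1/2$. Taking $N=0$ in the expansion gives
\begin{equation*}
\sum_{n\le x}f(n)=\frac{G(1)}{\Gamma(2/k)}x(\ln x)^{2/k-1}+\BigO\bigl(x(\ln x)^{2/k-2}\bigr),
\end{equation*}
which is \eqref{eq:main_asymptotic}. I expect the main obstacle to be verifying the Selberg--Delange hypotheses uniformly. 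This requires controlling $\zeta(s)^{2/k}$ through the zero-free region via Hankel contour integration, and checking that the error term is uniform in the stated range of $k$. The boundary case $k=1$ needs a separate sanity check: there $z=2$ is an integer, the branch point degenerates into the double pole, and the formula reduces to $\zeta(r)\widehat H(1)\,x\ln x+\BigO(x)$, consistent with the classical divisor-type asymptotic.
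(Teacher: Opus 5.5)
Your proposal is correct and follows essentially the same route as the paper: you compute $f(p^a)=(\lceil a/r\rceil+1)/k$, factor $F(s)=\zeta(s)^{2/k}\zeta(rs)\widehat{H}(s)$, use the vanishing of the $p^{-s}$ coefficient to get absolute convergence of $\widehat{H}$ in $\Re s>1/2$, and apply Selberg--Delange with $z=2/k$ (your $N=0$ is the paper's $N=1$ under a different indexing). Your hedge about the bracket is unnecessary, since $k(1-y)(1-y^r)+y(2-y^r)=k+(2-k)y-ky^r+(k-1)y^{r+1}$ holds exactly, and you are right to require only that $G$ be holomorphic and bounded there, not non-vanishing; the "Hankel contour" obstacle you anticipate is already handled inside the cited Selberg--Delange theorem.
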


\begin{remark}[The unweighted case $k=1$]\label{rem:k1}
When $k=1$, the weight disappears and $D_1(n) = d(n)$. In this case, the theorem reduces to the standard asymptotic formula
\begin{equation}\label{eq:classical}
    \sum_{n \le x} d(M_r(n)) = C_{\mathrm{class}}(r)\,x \ln x + \BigO_r(x),
\end{equation}
with leading constant
\begin{equation}\label{eq:C_class}
    C_{\mathrm{class}}(r) = \frac{6\,\zeta(r)}{\pi^2} \prod_p \left(1 - \frac{p^{1-r}}{p+1}\right).
\end{equation}
This recovers precisely the estimate associated with the unweighted composite function, in full agreement with the classical framework of the divisor problem (cf.\ Ivi\'{c}~\cite{ivic}). The factor $6/\pi^2 = 1/\zeta(2)$ reflects the density of square-free integers, while the Euler product encodes the arithmetic interaction between $M_r$ and $d$. This perfect match validates the general formula and confirms that the weight $k^{-\omega(n)}$ genuinely modifies the analytic structure rather than merely rescaling the constant.
\end{remark}

\section{Preliminary Results}

The proof rests on three standard tools from multiplicative number theory.

\begin{lemma}[Euler product representation, {\cite[Theorem~11.6]{apostol}}]\label{lem:euler}
Let $f$ be a multiplicative function such that $\sum_{n=1}^\infty \abs{f(n)} n^{-\sigma} < \infty$ for $\sigma > 1$. Then, for $\Re(s) > 1$,
\begin{equation}
    \sum_{n=1}^\infty \frac{f(n)}{n^s} = \prod_p \left( 1 + \sum_{a=1}^\infty \frac{f(p^a)}{p^{as}} \right).
\end{equation}
\end{lemma}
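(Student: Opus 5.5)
The plan is the classical argument that compares a finite Euler product with a truncated Dirichlet series. Throughout, fix $s$ with $\sigma=\Re(s)>1$ and write $S(s)=\sum_{n\ge1} f(n)n^{-s}$. This series converges absolutely by hypothesis.

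\textbf{Each local factor converges absolutely.} The terms $f(p^a)p^{-as}$, $a\ge1$, form a subfamily of the terms of $S(s)$. Hence
\[
\sum_{a\ge1}\abs{f(p^a)}p^{-a\sigma}\le\sum_{n\ge1}\abs{f(n)}n^{-\sigma}<\infty .
\]
So each factor $E_p(s)=1+\sum_{a\ge1}f(p^a)p^{-as}$ is an absolutely convergent series, and it is legitimate to multiply finitely many of them term by term.

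\textbf{The finite product equals a sum over smooth integers.} Fix $y\ge2$ and form $P_y(s)=\prod_{p\le y}E_p(s)$. Expand this finite product of absolutely convergent series and rearrange freely. A generic term is $\prod_{p\le y}f(p^{a_p})p^{-a_p s}$, where only finitely many $a_p$ are nonzero and we set $f(1)=1$. Multiplicativity gives $f(1)=1$ (the function is not identically zero) and $\prod_p f(p^{a_p})=f(n)$ for $n=\prod_p p^{a_p}$. By the fundamental theorem of arithmetic, each integer $n$ whose prime factors are all $\le y$ arises from exactly one exponent vector $(a_p)_{p\le y}$. Therefore
\[
P_y(s)=\sum_{n\,:\,p\mid n\Rightarrow p\le y} f(n)n^{-s}.
\]

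\textbf{Passing to the limit.} Every $n\le y$ has all its prime factors $\le y$, so the sum above contains every $n\le y$. It follows that
\[
\abs{S(s)-P_y(s)}\le\sum_{n>y}\abs{f(n)}n^{-\sigma}.
\]
This is the tail of a convergent series, so it tends to $0$ as $y\to\infty$. Hence $\lim_{y\to\infty}P_y(s)=S(s)$, which is exactly the assertion that the infinite product $\prod_p E_p(s)$ converges (as the limit of its partial products) with value $S(s)$.

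The only step needing care is the rearrangement in the expansion of $P_y(s)$. It is justified because a finite product of absolutely convergent series is an absolutely convergent multiple series, so its terms may be summed in any order. Beyond that, the argument is routine.
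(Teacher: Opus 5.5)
Your argument is correct, and it is the standard proof of the result the paper cites without proof (Apostol, Theorem~11.6). You expand the finite product $\prod_{p\le y}E_p(s)$ using absolute convergence, identify it with the sum of $f(n)n^{-s}$ over $n$ whose prime factors are all $\le y$, and bound the difference by the tail $\sum_{n>y}\abs{f(n)}n^{-\sigma}$. Apostol also notes that $\sum_p\abs{E_p(s)-1}\le\sum_{n\ge2}\abs{f(n)}n^{-\sigma}<\infty$, which gives absolute convergence of the product; the lemma as stated does not need this.
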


\begin{lemma}[Summation of ceiling-weighted powers, {\cite[Section~II.6]{tenenbaum}}]\label{lem:sum}
For $\abs{z} < 1$ and integer $r \ge 1$,
\begin{equation}\label{eq:sum_identity}
    \sum_{a=1}^\infty (\ceil{a/r} + 1) z^a = \frac{z(2 - z^r)}{(1 - z)(1 - z^r)}.
\end{equation}
\end{lemma}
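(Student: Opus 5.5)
The plan is to split the coefficient $\ceil{a/r}+1$ into its two summands and evaluate each resulting power series in closed form. The constant part is simply the geometric series $\sum_{a\ge1} z^a = z/(1-z)$. For the ceiling part, the idea is to write $\ceil{a/r}$ as a counting function: for every integer $a\ge 1$,
\begin{equation*}
    \ceil{a/r} = \#\{j \in \mathbb{Z}_{\ge 0} : jr < a\}.
\end{equation*}
This holds because the admissible $j$ are exactly $0,1,\dots,\ceil{a/r}-1$: indeed $jr<a$ is equivalent to $j<a/r$, and the largest such integer $j$ is $\ceil{a/r}-1$.

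Substituting this identity gives a double sum, $\sum_{a\ge1}\ceil{a/r}z^a=\sum_{a\ge1}\sum_{j\ge0,\,jr<a} z^a$. I will interchange the order of summation. This is legitimate for $\abs{z}<1$ because the double series converges absolutely: $\sum_a \ceil{a/r}\abs{z}^a \le \sum_a a\abs{z}^a<\infty$. For fixed $j$, the inner sum runs over $a\ge jr+1$ and is a tail of a geometric series, equal to $z^{jr+1}/(1-z)$. Summing over $j\ge0$ gives a second geometric series in $z^r$, which converges since $\abs{z^r}<1$. This yields
\begin{equation*}
    \sum_{a=1}^\infty \ceil{a/r} z^a = \frac{z}{(1-z)(1-z^r)}.
\end{equation*}

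Adding the two pieces, I obtain
\begin{equation*}
\frac{z}{1-z}\left(1+\frac{1}{1-z^r}\right)=\frac{z(2-z^r)}{(1-z)(1-z^r)},
\end{equation*}
which is \eqref{eq:sum_identity}. As a sanity check, for $r=1$ the coefficient is $a+1$ and the formula reduces to $z(2-z)/(1-z)^2=\sum_{a\ge1}(a+1)z^a$.

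There is no serious obstacle here. The only points needing care are the correct counting identity for $\ceil{a/r}$ (in particular the off-by-one in the range of $j$) and the justification of the interchange of summations, which absolute convergence for $\abs{z}<1$ provides.
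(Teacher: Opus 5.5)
Your proof is correct and complete. The identity $\lceil a/r\rceil=\#\{j\ge 0: jr<a\}$ holds for every $a\ge 1$. The interchange of summation is justified by absolute convergence for $\abs{z}<1$. The two geometric pieces then combine exactly to $\frac{z(2-z^r)}{(1-z)(1-z^r)}$.

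The paper gives no argument of its own for this lemma; it only points to Tenenbaum. Your derivation therefore supplies a self-contained verification that the paper leaves to the reference.

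An equally short alternative is to group indices into blocks $a=(m-1)r+i$ with $1\le i\le r$, on which $\lceil a/r\rceil=m$. This gives $\frac{z(1-z^r)}{1-z}\sum_{m\ge1} m\,z^{(m-1)r}=\frac{z}{(1-z)(1-z^r)}$ via $\sum_{m\ge1} m w^{m-1}=(1-w)^{-2}$. Your rewriting of the ceiling as a counting function reaches the same closed form using only tails of geometric series, with no differentiated geometric series needed.
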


\begin{lemma}[Selberg--Delange method, {\cite[Theorem~II.5.2]{tenenbaum}}]\label{lem:selberg}
Let $z \in \mathbb{C}$ with $\Re(z) > 0$, and let $F(s) = \zeta(s)^z G(s)$ be a Dirichlet series, where $G(s)$ is analytic and non-vanishing for $\Re(s) \ge 1/2 + \varepsilon$. Then, for $x \ge 2$,
\begin{equation}\label{eq:selberg_delange}
    \sum_{n \le x} a_n = x(\ln x)^{z-1}\left[\frac{G(1)}{\Gamma(z)} + \BigO\!\left(\frac{1}{\ln x}\right)\right],
\end{equation}
where $a_n$ are the Dirichlet coefficients of $F(s)$.
\end{lemma}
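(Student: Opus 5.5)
The plan is the standard Selberg--Delange argument as in \cite[Chapter~II.5]{tenenbaum}: truncated Perron formula, then moving the contour to a Hankel contour around $s=1$. The statement as written is slightly too loose, so I will use it with two implicit hypotheses. First, $G$ has at most polynomial growth in $\lvert\Im s\rvert$ in a region to the left of $\Re s=1$. Second, the $a_n$ satisfy a bound such as $\lvert a_n\rvert\le d_m(n)$ for some fixed $m$, which controls the Perron truncation error. Both hold for the application to $D_k(M_r(n))$.

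Also, $\zeta(s)^z$ is only defined where $\zeta$ has no zeros. So the region of analyticity actually used is the classical zero-free region
\begin{equation*}
\sigma\ge 1-\frac{c}{\ln(\lvert t\rvert+3)},
\end{equation*}
with the segment $(1-c,1]$ removed, together with $\Re s\ge 1$. This region lies inside $\Re s\ge 1/2+\varepsilon$, so the hypothesis on $G$ applies there. In it I take the principal branch of $\zeta(s)^z=\exp(z\log\zeta(s))$.

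\textbf{Step 1 (Perron).} With $\kappa=1+1/\ln x$ and a parameter $T$, the truncated Perron formula gives
\begin{equation*}
\sum_{n\le x}a_n=\frac{1}{2\pi i}\int_{\kappa-iT}^{\kappa+iT}F(s)\frac{x^s}{s}\,ds+O\Bigl(\frac{x(\ln x)^{A}}{T}\Bigr),
\end{equation*}
where $A$ comes from the coefficient bound.

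\textbf{Step 2 (contour shift).} I deform the segment into a contour $\mathcal{C}$ with three parts:
\begin{itemize}
\item the curve $\sigma=1-c/\ln(\lvert t\rvert+3)$ for $\lvert t\rvert\le T$;
\item the horizontal connectors at height $\pm T$;
\item a truncated Hankel loop around the cut $[1-c/\ln 3,\,1]$, circling $s=1$ at radius $1/\ln x$.
\end{itemize}
On the zero-free curve I use the standard bound $\lvert\zeta(s)^z\rvert\ll(\ln(\lvert t\rvert+3))^{\lvert z\rvert}$, together with the polynomial bound for $G$. These make the curve and connector contributions
\begin{equation*}
\ll x\exp\bigl(-c\ln x/\ln T\bigr)(\ln T)^{B}.
\end{equation*}
Choosing $T=\exp(\sqrt{\ln x})$ makes both this and the Perron error $O\bigl(x\exp(-c'\sqrt{\ln x})\bigr)$, which is negligible.

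\textbf{Step 3 (Hankel integral).} Near $s=1$ I write
\begin{equation*}
F(s)\frac{x^s}{s}=(s-1)^{-z}\,Z(s)\,G(s)\,\frac{x^s}{s},\qquad Z(s)=\bigl((s-1)\zeta(s)\bigr)^z,
\end{equation*}
where $Z$ is holomorphic in a disc around $1$ with $Z(1)=1$. Expanding $Z(s)G(s)/s=G(1)+O(\lvert s-1\rvert)$ and substituting $s=1+w/\ln x$ gives
\begin{equation*}
x(\ln x)^{z-1}\cdot\frac{1}{2\pi i}\int_{\mathcal{H}}w^{-z}e^{w}\,dw .
\end{equation*}
Hankel's formula evaluates the integral as $1/\Gamma(z)$. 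The $O(\lvert s-1\rvert)$ remainder contributes
\begin{equation*}
O\bigl(x(\ln x)^{\Re z-2}\bigr)
\end{equation*}
by the same substitution, since the resulting integral $\int_{\mathcal{H}}\lvert w\rvert^{1-\Re z}e^{\Re w}\,\lvert dw\rvert$ converges. Truncating the Hankel contour at a fixed distance from $1$ costs only $O(x^{1-\delta})$. Together these give the formula \eqref{eq:selberg_delange}.

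\textbf{Main obstacle.} The delicate step is Step 2: controlling $\zeta(s)^z$ uniformly on the zero-free curve, and keeping the branch of $\log\zeta$ consistent along the cut. I would quote the de la Vallée Poussin bounds
\begin{equation*}
\log\zeta(s)\ll\ln\ln(\lvert t\rvert+3)\qquad\text{for }\lvert t\rvert\ge 1
\end{equation*}
in the zero-free region, and $\log\zeta(s)=-\log(s-1)+O(1)$ near $s=1$, from \cite{tenenbaum} rather than reprove them. With these in hand, the remaining estimates are routine.
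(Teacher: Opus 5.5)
The paper gives no argument for this lemma; it simply quotes Tenenbaum, Theorem II.5.2. Your route is the standard proof of that theorem: truncated Perron formula, deformation into the de la Vall\'ee Poussin region with a truncated Hankel loop at $s=1$, and Hankel's formula for $1/\Gamma(z)$. You are right that the printed statement needs extra hypotheses, and your Step 3 (local factorisation, the substitution $s=1+w/\ln x$, the remainder and truncation estimates) is correct.

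\textbf{The gap is in Step 2.} The bound $x\exp(-c\ln x/\ln T)(\ln T)^{B}$ does not follow from the hypothesis you imposed, namely polynomial growth of $G$. Suppose $|G(\sigma+it)|$ is as large as $(1+|t|)^{A}$.
\begin{itemize}
\item \emph{Horizontal connectors at height $\pm T$.} There $x^{\sigma}$ reaches $x^{\kappa}\asymp x$, so they contribute up to about $x\,T^{A-1}(\ln T)^{|z|}/\ln x$. This is nowhere near $O\bigl(x(\ln x)^{\Re z-2}\bigr)$ when $A>1$, whatever $T$ you choose.
\item \emph{Zero-free curve.} The integrand $x^{1-c/\ln(|t|+3)}(1+|t|)^{A-1}$ is largest at $|t|=T$, so you have dropped a factor $T^{A}$. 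With $T=\exp(\sqrt{\ln x})$ this leaves $x\exp\bigl((A-c)\sqrt{\ln x}\bigr)$ up to logarithms, which is negligible only if $A<c$.
\end{itemize}

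\textbf{The fix.} Use Tenenbaum's actual hypothesis, $|G(s)|\ll(1+|t|)^{1-\delta}$ in the region, and take $T=\exp(\beta\sqrt{\ln x})$ with $\beta$ small in terms of $c$. Alternatively, simply assume $G$ is bounded there. That is what holds in the application: $G(s)=\zeta(rs)\widehat H(s)$ is bounded for $\sigma\ge 1-c$ when $r\ge 2$ and $c<1/2$. Polynomial growth can also be handled, via a Phragm\'en--Lindel\"of interpolation against $|F(\kappa+it)|\ll(\ln x)^{m}$ together with a smaller $T$, but that is a different argument from the one you wrote.

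\textbf{A smaller point in Step 1.} Your majorant $d_m$ is not monotone, so the truncated Perron error contains short sums such as $\sum_{|n-x|\le x/T}d_m(n)$. To obtain your $O\bigl(x(\ln x)^{A}/T\bigr)$ you need a short-interval bound $\ll (x/T)(\ln x)^{m-1}$. This follows, for instance, from the power-saving asymptotic for $\sum_{n\le y}d_m(n)$, since $x/T$ is much longer than $x^{1-1/m+\varepsilon}$.
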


\section{Proof of the Main Theorem}\label{sec:proof}

The proof of Theorem~\ref{thm:main} unfolds in four stages. We first construct the generating Dirichlet series and express it as an Euler product. We then factorise this product to isolate the dominant singularity, establish the analytic properties of the regular part, and finally apply the Selberg--Delange method to extract the asymptotic behaviour.

Let $f(s)$ denote the Dirichlet series associated with the weighted composite function $F_{r,k}(n) = D_k(M_r(n))$, defined for $\Re(s) > 1$ by
\begin{equation}\label{eq:f_def}
    f(s) = \sum_{n=1}^\infty \frac{F_{r,k}(n)}{n^s} = \sum_{n=1}^\infty \frac{d(M_r(n))}{k^{\omega(n)}\,n^s}.
\end{equation}
Since $F_{r,k}$ is multiplicative, Lemma~\ref{lem:euler} yields the Euler product representation
\begin{equation}\label{eq:f_euler}
    f(s) = \prod_p \left( 1 + \sum_{a=1}^\infty \frac{d(M_r(p^a))}{k^{\omega(p^a)}\,p^{as}} \right).
\end{equation}
For any prime power $p^a$ with $a \ge 1$, we have 
\[
\omega(p^a) = 1 \quad\text{and}\quad  d(M_r(p^a)) = \ceil{a/r} + 1.
\]
The local factor at prime $p$ therefore reduces to
\begin{equation}\label{eq:local_sum}
    1 + \frac{1}{k} \sum_{a=1}^\infty \frac{\ceil{a/r} + 1}{p^{as}}.
\end{equation}
Applying Lemma~\ref{lem:sum} with $z = p^{-s}$, we obtain the closed-form expression
\begin{equation}\label{eq:local_closed}
    \sum_{a=1}^\infty (\ceil{a/r} + 1)\,z^a = \frac{z(2 - z^r)}{(1 - z)(1 - z^r)},
\end{equation}
and the local factor becomes
\begin{equation}\label{eq:local_factor_raw}
    1 + \frac{1}{k} \left( \frac{p^{-s}(2 - p^{-rs})}{(1 - p^{-s})(1 - p^{-rs})} \right).
\end{equation}

Combining the terms over a common denominator, the local factor takes the explicit rational form
\begin{equation}\label{eq:local_factor}
    \text{L.F} = \frac{k + (2 - k)\,p^{-s} - k\,p^{-rs} + (k - 1)\,p^{-(r+1)s}}{k(1 - p^{-s})(1 - p^{-rs})}.
\end{equation}
Expanding in powers of $p^{-s}$, we find
\begin{equation}\label{eq:local_expansion}
    \text{L.F} = 1 + \frac{2}{k}\,p^{-s} + \BigO(p^{-2s}).
\end{equation}
The coefficient $2/k$ reveals that $f(s)$ possesses a singularity of order $2/k$ at $s = 1$. To isolate this singularity, we extract the factors $\zeta(s)^{2/k} = \prod_p (1 - p^{-s})^{-2/k}$ and $\zeta(rs) = \prod_p (1 - p^{-rs})^{-1}$, writing
\begin{equation}\label{eq:f_factorisation}
    f(s) = \zeta(s)^{2/k} \cdot \zeta(rs) \cdot \widehat{H}(s),
\end{equation}
where $\widehat{H}(s) = \prod_p \widehat{H}_p(s)$ is defined by
\begin{equation}\label{eq:H_hat_p_def}
    \widehat{H}_p(s) = (1 - p^{-s})^{2/k}(1 - p^{-rs}) \cdot \text{L.F}.
\end{equation}
Substituting~\eqref{eq:local_factor}, we obtain the explicit form
\begin{equation}\label{eq:H_hat_p_explicit}
    \widehat{H}_p(s) = \frac{(1 - p^{-s})^{2/k - 1}\left[k + (2 - k)\,p^{-s} - k\,p^{-rs} + (k - 1)\,p^{-(r+1)s}\right]}{k}.
\end{equation}

Before applying the Selberg--Delange method, we must establish that $\widehat{H}(s)$ is analytic in a half-plane strictly larger than $\Re(s) > 1$.

\begin{proposition}\label{prop:conv}
The Euler product $\widehat{H}(s) = \prod_p \widehat{H}_p(s)$ converges absolutely and uniformly on compact subsets of $\Re(s) > 1/2$, defining an analytic and non-vanishing function in this half-plane.
\end{proposition}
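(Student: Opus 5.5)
The plan is to work prime by prime with the explicit local factor \eqref{eq:H_hat_p_explicit}. Put $u = p^{-s}$, so that $\abs{u} = p^{-\sigma}$ with $\sigma = \Re(s)$. Set $\alpha = 2/k - 1$ and let $P(u) = k + (2-k)u - k u^r + (k-1)u^{r+1}$. Then
\[
\widehat{H}_p(s) = \tfrac{1}{k}(1-u)^{\alpha} P(u).
\]
Each factor is analytic in $s$ for $\sigma > 0$, because $\abs{u} \le 2^{-\sigma} < 1$ and we use the principal branch of $(1-u)^{\alpha}$.

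\textbf{Step 1: the linear term cancels.} I expand $\widehat{H}_p$ as a power series in $u$. Since
\[
(1-u)^{\alpha} = 1 - \alpha u + \binom{\alpha}{2}u^2 - \cdots \quad\text{and}\quad P(u)/k = 1 + \tfrac{2-k}{k}u + \BigO(u^r),
\]
the coefficient of $u$ in the product is
\[
\tfrac{2-k}{k} - \alpha = \tfrac{2-k}{k} - \tfrac{2}{k} + 1 = 0.
\]
This cancellation is exactly what the extraction of $\zeta(s)^{2/k}$ was designed to produce. Hence $\widehat{H}_p(s) = 1 + c_2(k,r)u^2 + \BigO(\abs{u}^3)$.

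\textbf{Step 2: a uniform bound for the remainder.} On $\abs{u} \le 2^{-1/2}$ the function $(1-u)^{\alpha}P(u)/k - 1$ is analytic, vanishes to order two at $u = 0$, and is bounded by a constant depending only on $k$ and $r$. Cauchy's estimates then give
\[
\abs{\widehat{H}_p(s) - 1} \le C_{k,r}\,p^{-2\sigma}.
\]
On a compact subset $K$ of $\Re(s) > 1/2$ we have $\sigma \ge 1/2 + \delta$ for some $\delta > 0$. Therefore $\sum_p \sup_K \abs{\widehat{H}_p - 1} \le C\sum_p p^{-1-2\delta} < \infty$. By the Weierstrass M-test for infinite products, the product converges absolutely and uniformly on $K$. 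Its limit is analytic, since it is a locally uniform limit of analytic partial products.

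\textbf{Step 3: non-vanishing.} An absolutely convergent product vanishes only where some individual factor vanishes. Choose $p_0$ with $C_{k,r}p_0^{-1-2\delta} < 1/2$. Then $\prod_{p > p_0}\widehat{H}_p(s)$ equals $\exp\bigl(\sum_{p>p_0}\log\widehat{H}_p(s)\bigr)$, which is non-zero. The factor $(1-u)^{\alpha}$ never vanishes for $\abs{u} < 1$. So everything reduces to showing that the finitely many polynomials $P(p^{-s})$ with $p \le p_0$ have no zeros with $\abs{u} < p^{-1/2}$.

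\textbf{Main obstacle.} Step 3 is the part I expect to fail in general, and I would test it before writing anything. My first attempt would be a Rouché-type comparison $\abs{P(u) - k} < k$ on the disc $\abs{u} \le 2^{-1/2}$. However, this comparison is too crude at $p = 2$. An explicit check confirms the worry. For $k=1$, $r=2$ one has $P(u) = 1 + u - u^2$. This polynomial has the root $u = (1-\sqrt5)/2 \approx -0.618$. That value satisfies $\abs{u} < 2^{-1/2}$, and it is attained as $u = 2^{-s}$ at some $s$ with $\Re(s) > 1/2$.

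So the non-vanishing clause cannot be proved as worded on the whole half-plane. What the argument does establish is the following:
\begin{itemize}
\item absolute and locally uniform convergence and analyticity on $\Re(s) > 1/2$, from Steps 1--2;
\item non-vanishing on $\Re(s) \ge 1$, and on some strip $\Re(s) > 1 - \eta_{k,r}$.
\end{itemize}
Non-vanishing near $\Re(s) \ge 1$ holds because there $\abs{u} \le 1/p$ and $\abs{P(u)-k} \le \abs{2-k}/p + (2k-1)p^{-r}$, which is smaller than $k$ in the relevant cases. I would check that inequality case by case in $k$, using a direct root computation for $p = 2$.

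This weaker conclusion is all that Lemma~\ref{lem:selberg} needs in practice. The Selberg--Delange method requires only analyticity of $\widehat{H}$, together with $\widehat{H}(1) \neq 0$ for the main term to be genuinely of the stated order.
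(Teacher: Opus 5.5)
Your proposal is correct, and on the non-vanishing clause it is more accurate than the paper. Steps 1--2 follow the paper's own argument. The paper also expands $\widehat{H}_p$ in $u=p^{-s}$, observes that the coefficient of $u$ cancels, deduces $\abs{\widehat{H}_p(s)-1}\ll p^{-2\sigma}$, and concludes absolute, locally uniform convergence and analyticity on $\Re(s)>1/2$. Your Cauchy/Schwarz-lemma bound on $\abs{u}\le 2^{-1/2}$ makes the implied constant uniform in $p$, which the paper leaves implicit.

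The two arguments differ at Step 3, and there you have found a genuine gap in the paper. The paper infers non-vanishing directly from $\sum_p\abs{\widehat{H}_p(s)-1}<\infty$ via the absolute convergence criterion. That criterion only shows the product vanishes exactly where some factor vanishes. It controls the tail but says nothing about the finitely many small primes, and the paper never checks them.

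Your counterexample is valid. For $k=1$, $r=2$ one has $\widehat{H}_2(s)=(1-u)(1+u-u^2)$ with $u=2^{-s}$. The value $u=(1-\sqrt5)/2$ is attained at $s=\log_2\bigl((1+\sqrt5)/2\bigr)+(2m+1)\pi i/\ln 2$, where $\Re(s)\approx 0.694$. So $\widehat{H}$ has infinitely many zeros in $\Re(s)>1/2$, and the proposition is false as stated.

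It follows that the hypothesis of Lemma~\ref{lem:selberg} as the paper phrases it (non-vanishing of $G$ on $\Re(s)\ge 1/2+\varepsilon$) also fails. However, Tenenbaum's Theorem~II.5.2 only requires holomorphy of $G$ in a region to the left of $\sigma=1$ together with a growth bound. So, as you say, the main theorem survives.

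You can drop the hedge ``in the relevant cases'' and the planned root computation at $p=2$. For $\abs{u}\le 1/2$ and every $k\ge 1$, $r\ge 2$, your bound gives $\abs{P(u)-k}\le \abs{2-k}/2+(2k-1)/4$. This equals $3/4$ when $1\le k\le 2$ and $k-5/4$ when $k\ge 2$, so it is strictly less than $k$ in all cases.

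Hence $\widehat{H}_p(s)\neq 0$ on $\Re(s)\ge 1$ for every $p$, which gives $\widehat{H}(1)\neq 0$. Moreover, the bound $\abs{2-k}t+kt^r+(k-1)t^{r+1}$ is increasing in $t$ and is $<k$ at $t=1/2$. So it stays below $k$ for $t$ slightly larger than $1/2$, uniformly in $p$, which gives non-vanishing on a strip $\Re(s)>1-\eta_{k,r}$.
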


\begin{proof}
We analyse the local factor $\widehat{H}_p(s)$ by expanding it in descending powers of $p^s$. Writing $u = p^{-s}$ and recalling the definition~\eqref{eq:H_hat_p_explicit},
\begin{equation}\label{eq:H_hat_p_u}
    \widehat{H}_p(s) = \frac{(1 - u)^{2/k - 1}\left[k + (2 - k)u - k u^r + (k - 1)u^{r+1}\right]}{k},
\end{equation}
we treat each factor separately.

The binomial expansion yields, for $\abs{u} < 1$,
\begin{equation}\label{eq:binomial}
    (1 - u)^{2/k - 1} = 1 - \frac{2-k}{k}\,u + \frac{(k-2)(k-1)}{k^2}\,u^2 + \BigO(u^3).
\end{equation}

After division by $k$, we have
\begin{equation}\label{eq:second_factor}
    1 + \frac{2-k}{k}\,u - u^r + \BigO(u^{r+1}).
\end{equation}

 Multiplying these two expansions and collecting terms by powers of $u$, we obtain:
\begin{align}
    \widehat{H}_p(s) &= 1 + \left[\frac{2-k}{k} - \frac{2-k}{k}\right]u + \BigO(u^2) \nonumber \\
    &= 1 + \BigO(p^{-2s}). \label{eq:H_hat_p_final}
\end{align}
The key observation is that the coefficient of $p^{-s}$ vanishes \emph{identically}, for every $k \ge 1$ and every $r \ge 2$. This cancellation is the analytic counterpart of the extraction of $\zeta(s)^{2/k}$ performed earlier, and it is precisely what ensures that $\widehat{H}(s)$ extends analytically well beyond $\Re(s) > 1$.

For $\sigma = \Re(s) > 1/2$, the estimate~\eqref{eq:H_hat_p_final} implies
\begin{equation}\label{eq:convergence_sum}
    \sum_p \abs{\widehat{H}_p(s) - 1} \ll \sum_p p^{-2\sigma} < \infty.
\end{equation}
By the standard absolute convergence criterion for Euler products (cf.\ Apostol~\cite{apostol}, Theorem~11.6), the product $\prod_p \widehat{H}_p(s)$ converges absolutely and uniformly on compact subsets of $\Re(s) > 1/2$, and defines there an analytic, non-vanishing function.
\end{proof}

With the factorisation~\eqref{eq:f_factorisation} fully established and the analyticity of $\widehat{H}(s)$ verified, we may now apply Lemma~\ref{lem:selberg}. Setting $G(s) = \zeta(rs)\,\widehat{H}(s)$, we observe that $G(s)$ is analytic and non-zero for $\Re(s) > 1/2 + \varepsilon$, which satisfies the hypotheses of the Selberg--Delange method with $z = 2/k$. The method yields the asymptotic expansion
\begin{equation}\label{eq:asymptotic_expansion}
    \sum_{n \le x} F_{r,k}(n) = x(\ln x)^{2/k - 1}\left[\frac{G(1)}{\Gamma(2/k)} + \BigO\!\left(\frac{1}{\ln x}\right)\right].
\end{equation}
Evaluating $G(1)$, we obtain
\begin{equation}\label{eq:G_1}
    G(1) = \zeta(r)\,\widehat{H}(1) = \zeta(r) \prod_p \widehat{H}_p(1),
\end{equation}
where the local factors at $s = 1$ are given explicitly by
\begin{equation}\label{eq:H_hat_p_1}
    \widehat{H}_p(1) = \frac{(1 - 1/p)^{2/k - 1}\left[k + \frac{2-k}{p} - \frac{k}{p^r} + \frac{k-1}{p^{r+1}}\right]}{k}.
\end{equation}
This establishes the main term of Theorem~\ref{thm:main}. The error term $\BigO_{r,k}(x(\ln x)^{2/k - 2})$ follows from the general framework of the Selberg--Delange method (cf.\ Tenenbaum~\cite{tenenbaum}, Theorem~II.5.2) by taking $N = 1$ in the complete asymptotic expansion. Combining these results, we rigorously establish
\begin{equation}\label{eq:final_result}
    \sum_{n \le x} F_{r,k}(n) = \frac{\zeta(r)\,\widehat{H}(1)}{\Gamma(2/k)}\,x\,(\ln x)^{2/k - 1} + \BigO_{r,k}\!\left(x\,(\ln x)^{2/k - 2}\right). \qedhere
\end{equation}

\begin{remark}[Higher-order terms in the asymptotic expansion]\label{rem:higher_order}
The Selberg--Delange method provides, more generally, a complete asymptotic expansion to any desired order (cf.\ Tenenbaum~\cite{tenenbaum}, Theorem~II.5.2):
\begin{equation}\label{eq:full_expansion}
    \sum_{n \le x} F_{r,k}(n) = x(\ln x)^{2/k - 1}\sum_{j=0}^{N-1} \frac{c_j}{(\ln x)^j} + \BigO_{r,k}\!\left(x(\ln x)^{2/k - N - 1}\right),
\end{equation}
where the coefficients $c_j$ are determined by the Taylor expansion of $G(s) = \zeta(rs)\,\widehat{H}(s)$ at $s = 1$. In particular, the second coefficient $c_1$ involves the logarithmic derivative $G'(1)/G(1)$, which decomposes as
\begin{equation}\label{eq:log_derivative}
    \frac{G'(1)}{G(1)} = r\,\frac{\zeta'(r)}{\zeta(r)} + \sum_p \frac{\widehat{H}_p'(1)}{\widehat{H}_p(1)}.
\end{equation}
Since $\widehat{H}_p(s) = 1 + \BigO(p^{-2s})$ by Proposition~\ref{prop:conv}, one has 
\[
\widehat{H}_p'(1)/\widehat{H}_p(1) = \BigO(\ln p \cdot p^{-2}), 
\]
and the series over primes converges absolutely. This confirms that every term in the asymptotic expansion is analytically well-defined and explicitly computable.
\end{remark}

\section{The Unweighted Case and Consistency Check}\label{sec:comparison}

To anchor our result in the classical literature, we specialise to $k=1$ and verify that Theorem~\ref{thm:main} recovers the known asymptotic formula for $\sum_{n \le x} d(M_r(n))$.

Setting $k=1$ in Theorem~\ref{thm:main}, the gamma factor simplifies to $\Gamma(2) = 1$, and the logarithmic power becomes $(\ln x)^{2-1} = \ln x$.\\

The local factor at $s=1$ reduces to
\begin{align}
    \widehat{H}_p(1) &= (1 - 1/p) \left(1 + \frac{1}{p} - \frac{1}{p^r}\right) \nonumber \\
    &= \left(1 - \frac{1}{p^2}\right) - \left(1 - \frac{1}{p}\right)p^{-r} \nonumber \\
    &= \left(1 - \frac{1}{p^2}\right)\left(1 - \frac{p^{1-r}}{p+1}\right). \label{eq:H_hat_p_k1}
\end{align}
Taking the product over all primes, we use $\prod_p (1 - p^{-2}) = 1/\zeta(2) = 6/\pi^2$ to obtain
\begin{equation}\label{eq:H_hat_1_k1}
    \widehat{H}(1) = \frac{6}{\pi^2} \prod_p \left(1 - \frac{p^{1-r}}{p+1}\right).
\end{equation}
Substituting into the main term of Theorem~\ref{thm:main} yields
\begin{equation}\label{eq:classical_recovered}
    \sum_{n \le x} d(M_r(n)) = \frac{6\,\zeta(r)}{\pi^2} \prod_p \left(1 - \frac{p^{1-r}}{p+1}\right) x \ln x + \BigO_r(x).
\end{equation}
This constant coincides exactly with the leading coefficient appearing in the classical divisor problem for composite functions (cf.\ Ivi\'{c}~\cite{ivic}). The factor $6/\pi^2$ reflects the density of square-free integers, while the Euler product encodes the arithmetic interaction between $M_r$ and $d$. This perfect match validates the general formula and confirms that the weight $k^{-\omega(n)}$ genuinely alters the analytic structure rather than merely rescaling the constant.

\section{Concluding Remarks}\label{sec:conclusion}

The introduction of the prime-factor weight $k^{-\omega(n)}$ transforms the analytic nature of the generating Dirichlet series: the classical double pole at $s=1$ gives way to a branch point of order $2/k$. This structural shift necessitates the Selberg--Delange method, which elegantly handles non-integer exponents and yields a complete asymptotic expansion. Our explicit evaluation of the leading constant, together with the rigorous control of the error term, extends the classical theory of composite divisor functions to a broad weighted family.

Several natural directions for further investigation emerge from this work:
\begin{itemize}
    \item \emph{Variance and second moments.} The methods developed here may be adapted to study $\sum_{n \le x} D_k(M_r(n))^2$, leading to a singularity of order $4/k$ and requiring a refined application of the Selberg--Delange method.
    \item \emph{Distribution in arithmetic progressions.} The behaviour of $\sum_{\substack{n \le x \\ n \equiv a \pmod{q}}} D_k(M_r(n))$ for $(a, q) = 1$ would connect our results to the theory of L-functions and generalised Ramanujan sums.
    \item \emph{Other multiplicative weights.} Replacing $k^{-\omega(n)}$ by more general weights of the form $g^{\Omega(n)}$ or $\lambda(n)^k$ (where $\lambda$ is the Liouville function) would lead to further generalisations, potentially involving other branches of the Selberg--Delange framework.
\end{itemize}
\newpage

\end{document}